\newtheorem{lemma}{Lemma}[]
\newtheorem{proposition}{Proposition}[]
\newtheorem{theorem}{Theorem}[]
\newtheorem{corollary}{Corollary}[]
\theoremstyle{definition}
\newtheorem{definition}{Definition}[]
\newtheorem{example}{Example}[]
\theoremstyle{plain}
\newcommand{\diam}{\operatorname{diam}}
\newcommand{\dis}{\operatorname{dis}}
\newcommand{\dist}{\operatorname{d}}
\newcommand{\Vol}{\operatorname{Vol}}
\renewcommand{\:}{\colon}
\renewcommand{\ss}{\subset}
\newcommand{\N}{\mathbb{N}}
\newcommand{\R}{\mathbb{R}}
\newcommand{\Z}{\mathbb{Z}}
\title{New geodesic lines in the Gromov-Hausdorff class lying in the cloud of the real line.}
\author{I.\,N.\,Mikhailov}
\date{}
\begin{document}
\maketitle
\begin{abstract}
In the paper we prove that, for arbitrary unbounded subset $A\subset R$ and an arbitrary bounded metric space~$X$, a curve $A\times_{\ell^1} (tX)$, $t\in[0,\,\infty)$ is a geodesic line in the Gromov--Hausdorff class. We also show that, for abitrary $\lambda > 1$, $n\in\N$, the following inequality holds: $\dist_{GH}\bigl(\Z^n,\,\lambda\Z^n\bigr)\ge\frac{1}{2}$. We conclude that a curve $t\Z^n$, $t\in(0,\,\infty)$ is not continuous with respect to the Gromov--Hausdorff distance, and, therefore, is not a gedesic line. Moreover, it follows that multiplication of all metric spaces lying on the finite Gromov--Hausdorff distance from $\R^n$ on some~$\lambda > 0$ is also discontinous with respect to the Gromov--Hausdorff distance. 
\end{abstract}

\section{Introduction}
\markright{\thesection.~Introduction}


The Gromov-Hausdorff distance is an important construction of metric geometry, that allows to define a pseudometric on the class of all metric spaces. This distance was first introduced by D.~Edwards in $1975$ in \cite{Edwards} and later became famous thanks to work \cite{GromovGroups}. Explicit historical details can be found in \cite{TuzhilinHistory}.

Traditionally, the Gromov-Hausdorff distance is applied in the studies of compact metric spaces. The space of all compact metric spaces endowed with the Gromov-Hausdorff distance, called \emph{the Gromov--Hausdorff space}, is well-studied. In particular, it is a complete, separable, geodesic metric space.

In the well-known monograph \cite{GromovEng}, Mikhail Gromov described some properties of the space  $\mathcal{GH}$ of all metric spaces, not necessarily compact, considered up to isometry, endowed with the Gromov–Hausdorff distance. In particular,
Mikhail Gromov introduced classes of metric spaces at a finite distance from some fixed metric space (in \cite{BogatyTuzhilin}, such classes were called \emph{clouds}). He announced that such classes are complete and contractible. As a simple example, he gave the Gromov–Hausdorff space. For this metric class, we can consider a natural mapping sending the metric space $(X, d_X)$ to $(X, \lambda d_X)$. If we now let $\lambda$ tend to $0$, we obtain the desired contraction. Mikhail Gromov
indicated that the class of metric spaces at a finite distance from $\R^n$ also has similar properties.

Nevertheless, it turned out later that everything is not so simple. Firstly, when working with the class $\mathcal{GH}$, set-theoretic difficulties arise. It is easy to show, that within the framework of von Neumann–Bernays–G\"odel (NBG) set theory, the space
$\mathcal{GH}$, as well as any cloud, is not a set, but a proper class, i.e., it cannot belong to any other class. To prove that a cloud is contractible, it is necessary to define a topology on it. However, it is impossible
to introduce a topology on a proper class, since any topological space is an element of its topology, which is impossible for a proper class by definition. In \cite{BogatyTuzhilin}, the authors proposed a way to define an analogue of a topology on classes filtered by sets, as well as continuous mappings between them. Secondly, it turned out that not all clouds are invariant under the multiplication of all their metric spaces by an arbitrary number $\lambda > 0$. In \cite{BogatyTuzhilin}, an example of a geometric progression $\{p^n\:n\in\Z\}$ for a prime number $ p > 2$ with a metric induced from $\R$, which bounces off itself by an infinite Gromov–Hausdorff distance under multiplication by~$2$, is given. Finally, while the completeness of an arbitrary cloud was carefully proved in \cite{BogatyTuzhilin}, contractibility has not yet been rigorously justified even for clouds of such natural metric spaces as $\R^n$. The difficulty is to check the continuity of the natural mapping of the multiplication of all spaces of a given cloud by a positive
number $\lambda$.

Another important problem related to the geometry of the Gromov–Hausdorff distance in the class $\mathcal{GH}$ is the problem of constructing geodesics. In \cite{Vihrov}, a class of metric spaces \emph{in general position}, everywhere dense in $\mathcal{GH}$, is presented, any
two of which, at a finite distance from each other, can be connected by a linear geodesic. However, it is still unknown whether any pair of metric spaces, located at a finite distance from each other, can be connected by some geodesic

In this paper, we construct new geodesics in the cloud of the real line. First, we show that if $A \ss \R$ is an unbounded subset, $B \ss \R$ is an arbitrary subset, $A'$ is a metric space at finite Gromov–Hausdorff distance from~$A$, and $X$ and $Y$ are arbitrary bounded metric spaces, then the inequality $d_{GH}\bigl(A' \times_{\ell^1} X, \,B \times_{\ell^1} Y\bigr) \ge \frac{\diam X-\diam Y}{2}$ holds. Using this estimate, we prove that, for an arbitrary bounded metric space $X$ and an unbounded subset $A\ss\R$, the curve $A \times_{\ell^1} (tX),\; t \in [0, \infty)$ is a geodesic in the Gromov–Hausdorff class. After this, we give an example of a space from the cloud $\R^n$ for which multiplication by $t > 0$ will not yield a geodesic in the Gromov–Hausdorff class. Namely, we prove that for arbitrary $n\in\N$, $\lambda > 1$, the inequality $d_{GH}\bigl(\Z^n, \lambda Z^n\bigr) \ge \frac{1}{2}$ holds, which implies that $\Z^n$ is the desired counterexample. Moreover, this inequality implies that the mapping of multiplication of all spaces in the cloud $\R^n$ by an arbitrary $\lambda > 0$ is not continuous with respect to the Gromov--Hausdorff distance. We thus show that the proof of contractibility of the Gromov–Hausdorff space cannot be generalized straightforwardly to the case of the cloud $\R^n$. For simplicity, in this place we do not use the technique of working with classes, filtered by sets, proposed in \cite{BogatyTuzhilin}. By the continuity of the mapping of the multiplication of all spaces of the cloud $[\R^n]$ by $\lambda > 0$ we mean the following natural property: if a sequence of metric spaces $(X_n)_n$ such that $X_n \in [\R^n]$ converges in the Gromov–Hausdorff sense to a metric space~$X$, then $(\lambda X_n)_n$ converges in the Gromov–Hausdorff sense to~$\lambda X$.

\section{Preliminaries}
\markright{\thesection.~Preliminaries}

In this section we provide definitions of the main constructions used, introduce notations, and formulate auxiliary results that we will need in proving the main results.

\subsection{Gromov--Hausdorff distance}

\emph{A metric space} is an arbitrary pair $(X,\,\dist_X)$, where $X$ is an arbitrary set, $\dist_X\: X\times X\to [0,\,\infty)$ is some metric on it, i.\,e. a non-negative, symmetric function that satisfies the triangle inequality.

The distance between any two points $x$ and $y$ of some metric space $(X, d_X)$ will often be denoted by $|xy|$ for convenience. By $U^X_r(a) =\{x\in X\colon |ax|<r\}$, $B^X_r(a) = \{x\in X\colon |ax|\le r\}$ we denote the open and closed balls centered at point~$a$ of radius~$r$ in the metric space~$X$. In those cases when it is clear in which metric space~$X$ the balls are considered, we will omit the superscript. For an arbitrary subset
$A\subset X$ of a metric space~$X$, let $U_r(A) = \cup_{a\in A} U_r(a)$ be an open $r$-neighborhood of~$A$. For non-empty subsets $A \ss X$, $B \ss X$ we put $\dist(A,\,B)=\inf\bigl\{|ab|:\,a\in A,\,b\in B\bigl\}$.

\begin{definition}
Let $A$ and $B$ be non-empty subsets of a metric space.
\emph{The Hausdorff distance} between $A$ and~$B$ is the quantity $$\dist_H(A,\,B) = \inf\bigl\{r > 0\colon A\subset U_r(B),\,B\subset U_r(A)\bigr\}.$$
\end{definition}
\begin{definition} Let $X$ and $Y$ be metric spaces. The triple $(X', Y', Z)$, consisting of a metric space $Z$ and its two subsets $X'$ and $Y'$, isometric to $X$ and~$Y$ respectively, is called \emph{a realization of the pair} $(X, Y)$.
\end{definition}

\begin{definition} \emph{The Gromov-Hausdorff distance} $d_{GH} (X, Y)$ between $X$ and~$Y$ is the exact lower bound of the numbers $r\ge 0$ for which there exists a realization $(X', Y', Z)$ of the pair $(X, Y)$ such that $\dist_H(X',\,Y') \le r$. 
\end{definition}


Now let $X,\,Y$ be non-empty sets.  

\begin{definition} Each $\sigma\subset X\times Y$ is called a \textit{relation} between $X$ and~$Y$.
\end{definition}

By $\mathcal{P}_0(X,\,Y)$ we denote the set of all non-empty relations between $X$ and~$Y$.

We put $$\pi_X\colon X\times Y\rightarrow X,\;\pi_X(x,\,y) = x,$$ $$\pi_Y\colon X\times Y\rightarrow Y,\;\pi_Y(x,\,y) = y.$$ 

\begin{definition} A relation $R\subset X\times Y$ is called a \textit{correspondence}, if restrictions $\pi_X|_R$ and $\pi_Y|_R$ are surjective.
\end{definition}

Let $\mathcal{R}(X,\,Y)$ be the set of all correspondences between $X$ and~$Y$.

\begin{definition} Let $X,\,Y$ be metric spaces, $\sigma \in \mathcal{P}_0(X,\,Y)$. The \textit{distortion} of $\sigma$ is the quantity $$\dis \sigma = \sup\Bigl\{\bigl||xx'|-|yy'|\bigr|\colon(x,\,y),\,(x',\,y')\in\sigma\Bigr\}.$$
\end{definition}

\begin{proposition}[\cite{BBI}]  \label{proposition: distGHformula}
For arbitrary metric spaces $X$ and~$Y$, the following equality holds $$2\dist_{GH}(X,\,Y) = \inf\bigl\{\dis\,R\colon R\in\mathcal{R}(X,\,Y)\bigr\}.$$
\end{proposition}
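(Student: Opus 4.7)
The plan is to prove the equality by establishing the two inequalities separately. In one direction I will convert a correspondence with small distortion into a realization with small Hausdorff distance; in the other, I will convert a realization with small Hausdorff distance into a correspondence with small distortion.

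For the inequality $2\dist_{GH}(X,Y)\le\inf\{\dis R\colon R\in\cR(X,Y)\}$, I would take any $R\in\cR(X,Y)$ with $r=\dis R<\infty$. On the disjoint union $Z=X\sqcup Y$ I would retain the original metrics on $X$ and on $Y$ and define the cross distance by
$$
d(x,y):=\inf\bigl\{\dist_X(x,x')+r/2+\dist_Y(y',y)\colon(x',y')\in R\bigr\}
$$
for $x\in X$ and $y\in Y$. The bulk of the argument is checking the triangle inequality, which splits into cases depending on how the three points are distributed between $X$ and $Y$. The only nontrivial case uses the defining bound $\bigl|\dist_X(x_1,x_1')-\dist_Y(y_1,y_1')\bigr|\le r$ for $(x_1,y_1),(x_1',y_1')\in R$, which is exactly the distortion inequality. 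One also verifies that the formula restricted to pairs in $X$ (respectively $Y$) reproduces the given metric, so the natural inclusions into $Z$ are isometric embeddings; this again reduces to the distortion bound. Since every $(x,y)\in R$ satisfies $d(x,y)\le r/2$ and $R$ is a correspondence, both copies are covered by $(r/2)$-neighbourhoods of each other, giving the desired realization.

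For the reverse inequality, I would fix any realization $(X',Y',Z)$ with $\dist_H(X',Y')<r$. For each $x\in X'$ choose some $y_x\in Y'$ with $|xy_x|<r$, and for each $y\in Y'$ choose some $x_y\in X'$ with $|x_yy|<r$. Setting
$$
R:=\bigl\{(x,y_x)\colon x\in X'\bigr\}\cup\bigl\{(x_y,y)\colon y\in Y'\bigr\},
$$
both projections are surjective by construction, so $R$ is a correspondence. For any two pairs $(a,b),(a',b')\in R$, the triangle inequality in $Z$ gives $\bigl||aa'|-|bb'|\bigr|\le|ab|+|a'b'|<2r$, hence $\dis R\le 2r$. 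Passing to the infimum over admissible $r$ yields the bound.

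The main obstacle is the first direction, where one must check that the cross-distance formula really defines a pseudometric on $X\sqcup Y$ extending both given metrics; the verification is routine but genuinely case-based. The second direction, by contrast, is an almost immediate consequence of the triangle inequality once the correspondence is built by choosing two selection maps.
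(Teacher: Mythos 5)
The paper gives no proof of this proposition, citing it from \cite{BBI}; your argument is correct and is essentially the standard one found there: glue $X$ and $Y$ along the correspondence with a gap of $r/2$ (the only delicate point being the triangle inequality through the other factor, which uses the distortion bound exactly as you indicate), and conversely extract a correspondence from a realization by two selection maps. No gaps.
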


\subsection{Clouds}

By $\mathcal{VGH}$ denote the class of all non-empty metric spaces, equipped with the Gromov--Hausdorff distance.  

\begin{theorem}[\cite{BBI}]
The Gromov--Hausdorff distance is a generalized pseudometric on $\mathcal{VGH}$, which vanishes on each pair of isometric spaces. Namely, the Gromov--Hausdorff distance is symmetric, satisfies the triangle inequality, but, generally speaking, can be infinite.
\end{theorem}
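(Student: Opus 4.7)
The plan is to verify, in turn, the three properties of a generalized pseudometric (symmetry, triangle inequality, vanishing on isometric pairs) and then exhibit a pair on which the value is $+\infty$. The symmetry and the vanishing on isometric pairs fall straight out of the definition: for symmetry, every realization $(X',Y',Z)$ of $(X,Y)$ is also a realization of $(Y,X)$ after swapping, and $\dist_H$ is itself symmetric; for vanishing on isometric spaces, if $\varphi\colon X\to Y$ is an isometry, take $Z=X$, $X'=X$, $Y'=X$ (with the identification through $\varphi$), which gives $\dist_H(X',Y')=0$.

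The heart of the proof is the triangle inequality $\dist_{GH}(X,Z)\le \dist_{GH}(X,Y)+\dist_{GH}(Y,Z)$. Here I would avoid working directly with realizations (which would require gluing metric spaces along an isometric copy of $Y$, a classical but slightly technical construction) and instead appeal to Proposition \ref{proposition: distGHformula}. Given $\e>0$, pick correspondences $R_1\in\cR(X,Y)$ and $R_2\in\cR(Y,Z)$ with $\dis R_1<2\dist_{GH}(X,Y)+\e$ and $\dis R_2<2\dist_{GH}(Y,Z)+\e$, and form the composition
\[
R = \bigl\{(x,z)\in X\x Z : \exists\,y\in Y,\;(x,y)\in R_1,\;(y,z)\in R_2\bigr\}.
\]
Surjectivity of the projections of $R_1$ and $R_2$ implies that $R\in\cR(X,Z)$. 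For any two pairs $(x,z),(x',z')\in R$, choose corresponding $y,y'\in Y$; the triangle inequality in $\R$ applied to $|xx'|,|yy'|,|zz'|$ yields $\bigl||xx'|-|zz'|\bigr|\le\dis R_1+\dis R_2$, hence $\dis R\le\dis R_1+\dis R_2$. Applying the proposition once more gives the desired bound after letting $\e\to 0$. A separate line has to be added for the case when one of the two summands is $+\infty$, in which the inequality is vacuous.

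Finally, to see that the value can be $+\infty$, the clean way is to establish first the (easy) estimate $|\diam X-\diam Y|\le 2\dist_{GH}(X,Y)$: take any correspondence $R$ between $X$ and $Y$ and, for $x,x'\in X$ with $|xx'|$ close to $\diam X$, pick any $(x,y),(x',y')\in R$; then $|xx'|\le|yy'|+\dis R\le\diam Y+\dis R$, and symmetrically. Taking $X=\R$ (of infinite diameter) and $Y$ a one-point space (of diameter $0$) immediately forces $\dist_{GH}(X,Y)=+\infty$.

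The only step where one must be careful is the triangle inequality: one needs both that the composed relation $R$ is still a correspondence (not just a non-empty relation) and that its distortion is bounded as claimed; the rest of the proof is essentially a sequence of immediate observations from the definitions. I would expect no genuine obstacle — the argument above is the standard one and does not require any additional machinery beyond what the preliminaries already provide.
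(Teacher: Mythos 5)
Your argument is correct: symmetry and vanishing on isometric pairs are immediate from the definition, the triangle inequality follows from the composition of correspondences (whose distortion is subadditive) together with Proposition~\ref{proposition: distGHformula}, and the diameter estimate $|\diam X-\diam Y|\le 2\dist_{GH}(X,Y)$ applied to $\R$ and a point shows the distance can be infinite. The paper itself gives no proof of this theorem --- it is quoted from the literature --- and your argument is exactly the standard one from that source, so there is nothing further to compare.
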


The class $\mathcal{GH}_0$ is obtained from $\mathcal{VGH}$ by factorization by zero distances, that is, by the equivalence relation: $X\sim_0 Y$ iff $\dist_{GH}(X,\,Y) = 0$.

\begin{definition}
Consider the equivalence relation $\sim_1$ on $\mathcal{GH}_0$: $X\sim_1 Y$ iff $\dist_{GH}(X,\,Y) < \infty$. The corresponding equivalence classes are called \emph{clouds}.
\end{definition}

For an arbitrary metric space $X$, we denote the cloud it defines by $[X]$. By $\Delta_1$ we denote the metric space consisting of a single point. Thus, $[\Delta_1]$ is the cloud consisting of the classes of all bounded spaces at zero Gromov--Hausdorff distance from each other.

\subsection{Cartesian product of metric spaces}

\begin{definition}
Let $X,\,Y$ be two non-empty sets. By $X\times_\rho Y$ we denote the Cartesian product of $X$ and $Y$, equipped with the metric $\rho$.
\end{definition}

Now let $(X,\,\dist_X)$, $(Y,\,\dist_Y)$ be arbitrary metric spaces.  

\begin{definition}
By $X\times_{\ell^1} Y$ we denote the Cartesian product $X\times Y$, equipped with the following $\ell^1$ (or Manhattan) metric $$\dist\bigl(p,\,p'\bigr) = \dist_X(x,\,x') + \dist_Y(y,\,y'),$$ where $p = (x,\,y)$, $p' = (x',\,y')$ are arbitrary points of $X\times Y$.
\end{definition}

\begin{definition} Suppose $A_1,\,A_2$, $B_1,\,B_2$ are non-empty sets. \emph{The Cartesian product} of correspondences $R_1\in\mathcal{R}(A_1,\,A_2)$ and $R_2\in\mathcal{R}(B_1,\,B_2)$ is a correspondence $R_1\times R_2$ between $A_1\times B_1$ and $A_2\times B_2$ defined as follows $$R_1\times R_2 = \Bigl\{\bigl((a_1,\,b_1),\,(a_2,\,b_2)\bigr)\: (a_1,\,a_2)\in R_1,\;(b_1,\,b_2)\in R_2\Bigr\}.$$
\end{definition}

\begin{lemma}[\cite{BBI}, \cite{TuzhilinLectures}]\label{lemma: cartesianproductofcorrespondences}
Let $A_1,\,A_2$, $B_1,\,B_2$ be arbitrary metric spaces, $R_1\in\mathcal{R}(A_1,\,A_2)$, $R_2\in\mathcal{R}(B_1,\,B_2)$. Then \begin{enumerate}
     \item $R_1\times R_2\in\mathcal{R}\bigl(A_1\times_{\ell^1} B_1,\,A_2\times_{\ell^1}B_2\bigr)$;
     \item $\dis R_1\times R_2\le \dis R_1 + \dis R_2$.
 \end{enumerate}
\end{lemma}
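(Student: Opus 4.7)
The plan is to verify the two parts of the lemma directly from the definitions of correspondence, $\ell^1$-metric, and distortion, with no real obstacle beyond careful bookkeeping.

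For part (1), I would take an arbitrary point $(a_1,b_1)\in A_1\times_{\ell^1} B_1$ and exploit the surjectivity of $\pi_{A_1}|_{R_1}$ and $\pi_{B_1}|_{R_2}$ to produce $a_2\in A_2$ with $(a_1,a_2)\in R_1$ and $b_2\in B_2$ with $(b_1,b_2)\in R_2$. By the very definition of $R_1\times R_2$, the element $\bigl((a_1,b_1),(a_2,b_2)\bigr)$ lies in $R_1\times R_2$, hence projects onto $(a_1,b_1)$, showing that the first projection is surjective. The symmetric argument applied to an arbitrary $(a_2,b_2)\in A_2\times_{\ell^1} B_2$ handles the second projection.

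For part (2), I would fix two arbitrary elements $\bigl((a_1,b_1),(a_2,b_2)\bigr)$ and $\bigl((a_1',b_1'),(a_2',b_2')\bigr)$ of $R_1\times R_2$; by definition this means $(a_1,a_2),(a_1',a_2')\in R_1$ and $(b_1,b_2),(b_1',b_2')\in R_2$. Unfolding the $\ell^1$-metric on both product spaces, the key quantity to bound is
$$\Bigl|\bigl(|a_1 a_1'|+|b_1 b_1'|\bigr)-\bigl(|a_2 a_2'|+|b_2 b_2'|\bigr)\Bigr|.$$
Applying the triangle inequality for absolute values, I would split this as
$$\bigl||a_1 a_1'|-|a_2 a_2'|\bigr|+\bigl||b_1 b_1'|-|b_2 b_2'|\bigr|,$$
and then bound each summand by $\dis R_1$ and $\dis R_2$ respectively, using the definition of distortion applied to the pairs in $R_1$ and $R_2$. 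Taking the supremum over all admissible choices of the two elements of $R_1\times R_2$ yields $\dis(R_1\times R_2)\le \dis R_1+\dis R_2$, which completes the proof.

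The argument is essentially bookkeeping: the only place that could trip one up is the step of splitting the absolute-value difference, but this is immediate from $|x+y-u-v|\le|x-u|+|y-v|$, so I expect no genuine obstacle.
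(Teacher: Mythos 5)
Your proof is correct, and since the paper only cites this lemma from \cite{BBI} and \cite{TuzhilinLectures} without reproducing a proof, there is nothing to diverge from: your direct verification (surjectivity of both projections for part (1), and the inequality $|x+y-u-v|\le|x-u|+|y-v|$ applied to the $\ell^1$-distances for part (2)) is exactly the standard argument those references give.
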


We will need the following simple statement

\begin{lemma} \label{correctprod}
Take $A_1,\,A_2\in [X]$ and $B_1,\,B_2\in [Y]$, for two arbitrary metric spaces $X,\,Y$. Then $$\dist_{GH}\bigl(A_1\times_{\ell^1} B_1,\,A_2\times_{\ell^1} B_2\bigr) \le \dist_{GH}(A_1,\,A_2)+\dist_{GH}(B_1,\,B_2).$$
\end{lemma}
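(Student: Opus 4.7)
The plan is to reduce the statement directly to Proposition~\ref{proposition: distGHformula} combined with Lemma~\ref{lemma: cartesianproductofcorrespondences}. The hypotheses $A_1,A_2\in[X]$ and $B_1,B_2\in[Y]$ guarantee that both $d_{GH}(A_1,A_2)$ and $d_{GH}(B_1,B_2)$ are finite, so we may actually use the distortion characterization with finite bounds (otherwise the inequality is trivial because the right-hand side is $+\infty$).

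First I would fix an arbitrary $\varepsilon>0$. By Proposition~\ref{proposition: distGHformula} there exist correspondences $R_1\in\cR(A_1,A_2)$ and $R_2\in\cR(B_1,B_2)$ with
\[
\dis R_1 < 2\,\dist_{GH}(A_1,A_2)+\varepsilon, \qquad \dis R_2 < 2\,\dist_{GH}(B_1,B_2)+\varepsilon.
\]
By Lemma~\ref{lemma: cartesianproductofcorrespondences}, the product $R_1\times R_2$ is a correspondence between $A_1\times_{\ell^1}B_1$ and $A_2\times_{\ell^1}B_2$, and its distortion satisfies $\dis(R_1\times R_2)\le \dis R_1+\dis R_2$. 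Applying Proposition~\ref{proposition: distGHformula} once more,
\[
2\,\dist_{GH}\bigl(A_1\times_{\ell^1}B_1,\,A_2\times_{\ell^1}B_2\bigr)\le \dis(R_1\times R_2)<2\,\dist_{GH}(A_1,A_2)+2\,\dist_{GH}(B_1,B_2)+2\varepsilon.
\]

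Finally I would let $\varepsilon\to 0$ to obtain the required inequality. There is no real obstacle here: the only thing one might worry about is that the infima in Proposition~\ref{proposition: distGHformula} need not be attained, which is precisely why I introduce the parameter $\varepsilon$ and pass to the limit. Thus the lemma is essentially a formal consequence of the two results already stated in the preliminaries.
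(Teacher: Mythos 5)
Your proof is correct and follows essentially the same route as the paper: fix $\varepsilon>0$, pick near-optimal correspondences $R_1,R_2$ via Proposition~\ref{proposition: distGHformula}, bound $\dis(R_1\times R_2)$ by Lemma~\ref{lemma: cartesianproductofcorrespondences}, and let $\varepsilon\to 0$. Nothing to add.
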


\begin{proof}
By the condition, $\dist_{GH}(A_1,\,A_2)<\infty$, $\dist_{GH}(B_1,\,B_2)<\infty$. By Proposition~\ref{proposition: distGHformula}, for any $\varepsilon > 0$ there exist correspondences $R_1\in\mathcal{R}(A_1,\,A_2)$ and $R_2\in\mathcal{R}(B_1,\,B_2)$ such that $\dis R_1 < 2\dist_{GH}(A_1,\,A_2)+\varepsilon$, $\dis R_2 < 2\dist_{GH}(B_1,\,B_2)+\varepsilon$. Then consider the correspondence $R_1\times R_2$ between $A_1\times B_1$ and $A_2\times B_2$. By Lemma \ref{lemma: cartesianproductofcorrespondences}, the following inequality holds $$\dis\bigl(R_1\times R_2\bigr)\le \dis R_1 + \dis R_2 \le 2\bigl(\dist_{GH}(A_1,\,A_2)+\dist_{GH}(B_1,\,B_2)\bigr) + 2\varepsilon.$$ By Proposition \ref{proposition: distGHformula}, since $\varepsilon > 0$ is arbitrary, we obtain $$\dist_{GH}\bigl(A_1\times_{\ell^1} B_1,\,A_2\times B_2\bigr)\le \dist_{GH}(A_1,\,A_2)+\dist_{GH}(B_1,\,B_2),$$ which completes the proof.
\end{proof}

\subsection{A few more auxiliary results}

In this section we present two more statements that we will need in the proofs.

\begin{theorem}[\cite{BBI}, \cite{TuzhilinLectures}]\label{theorem: geodesicinboundedspaces}
The curve $tX$, $t\in[0,\,\infty)$ is a geodesic in the Gromov--Hausdorff class for an arbitrary bounded metric space $X$. In particular, for arbitrary $t_1,\, t_2\ge 0$, the following equality holds 
$$d_{GH}\bigl(t_1X,\,t_2X\bigr) = \frac{|t_1-t_2|}{2}\diam X.$$
\end{theorem}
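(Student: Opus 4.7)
The plan is to establish the distance formula $d_{GH}(t_1 X, t_2 X) = \frac{|t_1-t_2|}{2}\diam X$ directly, and then observe that additivity along the parameter $t$ forces the curve to be a geodesic.

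For the upper bound, I would use the identity correspondence $R = \{(x,x) : x \in X\} \in \mathcal{R}(t_1 X, t_2 X)$. For any two pairs $(x,x), (x',x') \in R$, the relevant discrepancy is $|t_1|xx'|_X - t_2|xx'|_X| = |t_1-t_2|\cdot|xx'|_X$, so $\dis R = |t_1-t_2|\diam X$. By Proposition~\ref{proposition: distGHformula}, this yields $d_{GH}(t_1 X, t_2 X) \le \tfrac{1}{2}|t_1-t_2|\diam X$.

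For the lower bound, take an arbitrary correspondence $R \in \mathcal{R}(t_1 X, t_2 X)$ and, without loss of generality, assume $t_2 \ge t_1$. Given $\varepsilon > 0$, pick $y, y' \in X$ regarded as elements of $t_2 X$ with $|yy'|_X > \diam X - \varepsilon$ (possible by definition of diameter). Since $R$ is a correspondence, there exist $x, x' \in X$ with $(x,y), (x',y') \in R$. Then
\[
\dis R \;\ge\; \bigl|\,t_2|yy'|_X - t_1|xx'|_X\,\bigr| \;\ge\; t_2(\diam X - \varepsilon) - t_1 \diam X \;=\; (t_2 - t_1)\diam X - t_2\varepsilon,
\]
where for small enough $\varepsilon$ the expression inside the absolute value is nonnegative because $t_2|yy'|_X > t_1 \diam X \ge t_1|xx'|_X$. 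Letting $\varepsilon\to 0$ and using symmetry, we conclude $\dis R \ge |t_1-t_2|\diam X$, so $2 d_{GH}(t_1 X, t_2 X) \ge |t_1-t_2|\diam X$ by Proposition~\ref{proposition: distGHformula}. Combined with the upper bound, this gives the claimed equality.

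Finally, to conclude that $t \mapsto tX$ is a geodesic line, I would observe that for any $0 \le t_1 \le t_0 \le t_2$ the just-established formula gives
\[
d_{GH}(t_1 X, t_0 X) + d_{GH}(t_0 X, t_2 X) = \tfrac{1}{2}(t_0 - t_1)\diam X + \tfrac{1}{2}(t_2 - t_0)\diam X = d_{GH}(t_1 X, t_2 X),
\]
so the curve is distance-additive, i.e.\ a geodesic (with constant speed $\tfrac{1}{2}\diam X$). The main subtlety, and the only place where care is needed, is the sign handling in the lower bound: one must choose to stretch the pair in the \emph{larger-scaled} copy so that the difference $t_2|yy'|_X - t_1|xx'|_X$ is guaranteed to be nonnegative once $\varepsilon$ is small; if instead one picked a nearly-diametral pair in $t_1 X$, no useful lower bound would follow because $|xx'|_X$ in $t_1 X$ is simply bounded above, not below, by $\diam X$.
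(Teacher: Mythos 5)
The paper gives no proof of this statement---it is quoted as a known result from \cite{BBI} and \cite{TuzhilinLectures}---so there is nothing internal to compare against; judged on its own, your argument is correct and is the standard one. The identity correspondence gives the upper bound $\dis R=|t_1-t_2|\diam X$, and your lower bound via a nearly diametral pair in the larger-scaled copy is sound (in the degenerate cases $t_1=t_2$ or $\diam X=0$ the claim that the quantity inside the absolute value is nonnegative can fail, but there the inequality is trivial anyway). The concluding step is also fine: the formula $d_{GH}(t_1X,t_2X)=\frac{1}{2}|t_1-t_2|\diam X$ already exhibits $t\mapsto tX$ as a constant-speed geodesic, which is exactly what the additivity observation records.
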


\begin{theorem}[\cite{HKang}]\label{theorem: integerpointsinunitball}
Consider $\R^n$ with the Euclidean metric. For the number $N(t)$ of points with integer coordinates in the unit ball $B_t(0)\subset\R^n$, the following asymptotic formula holds: $$N(t) = \Vol B_1(0)\cdot t^n \bigl(1+o(1)\bigr),\;t\to\infty,$$
where $\Vol B_1(0)$ is the volume of the unit ball in $\R^n$.
\end{theorem}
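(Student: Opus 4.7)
The plan is the classical sandwich argument via a unit cube tiling, which makes the leading term $\Vol B_1(0)$ appear transparently.

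First, I would attach to each lattice point $z \in \Z^n$ the unit cube
$C_z = z + [-1/2,\,1/2]^n$. These cubes tile $\R^n$ up to boundaries of measure zero, each has volume $1$, and the Euclidean distance from the center $z$ to any point of $C_z$ is at most $\sqrt{n}/2$. Therefore
$$N(t) = \#\bigl(\Z^n \cap B_t(0)\bigr) = \Vol\!\Bigl(\bigcup_{z \in \Z^n \cap B_t(0)} C_z\Bigr).$$

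Next I would establish two set-theoretic inclusions that translate the lattice count into a volume. For the upper bound, if $z \in \Z^n \cap B_t(0)$ and $x \in C_z$, then $|x| \le |z| + \sqrt{n}/2 \le t + \sqrt{n}/2$, so
$$\bigcup_{z \in \Z^n \cap B_t(0)} C_z \;\subset\; B_{t + \sqrt{n}/2}(0).$$
For the lower bound, any $x \in B_{t - \sqrt{n}/2}(0)$ lies in some cube $C_z$ with $z \in \Z^n$, and then $|z| \le |x| + \sqrt{n}/2 < t$, giving
$$B_{t - \sqrt{n}/2}(0) \;\subset\; \bigcup_{z \in \Z^n \cap B_t(0)} C_z$$
(for $t \ge \sqrt{n}/2$). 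Taking volumes and using the scaling $\Vol B_r(0) = r^n \Vol B_1(0)$ yields
$$\Vol B_1(0)\,\bigl(t - \sqrt{n}/2\bigr)^n \;\le\; N(t) \;\le\; \Vol B_1(0)\,\bigl(t + \sqrt{n}/2\bigr)^n.$$

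Finally I would divide by $t^n$ and let $t \to \infty$. Both $\bigl(1 \pm \sqrt{n}/(2t)\bigr)^n$ tend to $1$, so $N(t)/t^n \to \Vol B_1(0)$, which is exactly $N(t) = \Vol B_1(0)\cdot t^n(1+o(1))$. There is no real obstacle here; the only thing worth stating carefully is the bound on the diameter of $C_z$, since this is what controls the $\sqrt{n}/2$ inflation/deflation of the radius. Sharper error terms (the classical Gauss circle problem) are not needed for the statement as given.
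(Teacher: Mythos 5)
Your proof is correct. Note, however, that the paper does not prove this statement at all: it is quoted as a known result from the cited reference, which actually establishes much finer information (quantitative error terms for the lattice-point count in balls centred at Diophantine points). For the weak form stated here --- just the leading-order asymptotic $N(t)=\Vol B_1(0)\cdot t^n(1+o(1))$ --- your elementary sandwich argument via the unit-cube tiling is entirely adequate and self-contained: the identification of $N(t)$ with the volume of the union of cubes, the two inclusions $\bigcup C_z\subset B_{t+\sqrt n/2}(0)$ and $B_{t-\sqrt n/2}(0)\subset\bigcup C_z$, and the passage to the limit are all sound. The only cosmetic remark is that in the lower bound you only get $|z|\le |x|+\sqrt n/2\le t$ rather than a strict inequality, but since $B_t(0)$ is the closed ball in this paper's notation, that is exactly what is needed. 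Your approach buys a proof from first principles; the paper's citation buys nothing extra for its purposes, since only the leading term is used in the application to $\dist_{GH}(\Z^n,\lambda\Z^n)$.
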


\section{Main results}

\begin{theorem} \label{theorem: mainproductinequality}
Suppose $A'\in[A]$, $A\subset\R$ is an unbounded subset, $B\subset\R$, $X,\,Y\in[\Delta_1]$. Then $$\dist_{GH}(A'\times_{\ell^1}X,\,B\times_{\ell^1}Y)\ge\frac{\diam X - \diam Y}{2}.$$
\end{theorem}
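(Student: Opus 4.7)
The plan is to argue by contradiction: suppose there is a correspondence $R$ between $A' \times_{\ell^1} X$ and $B \times_{\ell^1} Y$ with $\dis R = D < \diam X - \diam Y$, and derive a contradiction by constructing a long chain of ``parallel slices'' whose images in $B \subset \R$ cannot simultaneously fit. Fix a small $\varepsilon > 0$ and points $x_1, x_2 \in X$ with $d_X(x_1, x_2) > \diam X - \varepsilon$. Using the unboundedness of $A$ together with the finiteness of $\dist_{GH}(A, A')$, for a large integer $M$ and an $L$ much larger than $\diam X$, $\diam Y$, $\dist_{GH}(A, A')$, and $D$, pick $\tilde a_1, \ldots, \tilde a_M \in A'$ with pairwise distances at least $L$. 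For each $i$ and each $j \in \{1, 2\}$, select $(b_i^{(j)}, y_i^{(j)}) \in B \times Y$ with $\bigl((\tilde a_i, x_j), (b_i^{(j)}, y_i^{(j)})\bigr) \in R$. The intra-slice distortion bound forces $|b_i^{(1)} - b_i^{(2)}| \ge \diam X - \varepsilon - \diam Y - D > 0$, so one may define orientations $\sigma_i = \operatorname{sign}(b_i^{(2)} - b_i^{(1)}) \in \{\pm 1\}$. By pigeonhole, at least $\lceil M/2 \rceil$ of the $\sigma_i$ coincide; restrict to that sub-collection (halving $M$) and assume without loss of generality $\sigma_i = +1$ throughout.

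Because the pairwise distances $|b_i^{(1)} - b_j^{(1)}|$ differ from $|\tilde a_i - \tilde a_j|$ by at most $\diam Y + D$, on any triple of our sufficiently well-separated points the values $b_i^{(1)}$ must be monotone in a common order (otherwise three pairwise distances in $\R$ could not all be close to their $A'$-counterparts). Hence we may reorder so that $b_1^{(1)} < b_2^{(1)} < \cdots < b_M^{(1)}$. For each consecutive pair $(i, i+1)$, the distortion bound applied to $(\tilde a_i, x_2)$ and $(\tilde a_{i+1}, x_1)$, combined with $b_i^{(2)} < b_{i+1}^{(1)}$ (valid since $L \gg \diam X + D$), yields $b_{i+1}^{(1)} - b_i^{(2)} \ge |\tilde a_i - \tilde a_{i+1}| + \diam X - \varepsilon - \diam Y - D$. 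Adding the intra-slice bound $b_i^{(2)} - b_i^{(1)} \ge \diam X - \varepsilon - \diam Y - D$, telescoping over $i = 1, \ldots, M-1$, and using $\sum_i |\tilde a_i - \tilde a_{i+1}| \ge |\tilde a_1 - \tilde a_M|$, one obtains
\[
b_M^{(1)} - b_1^{(1)} \;\ge\; |\tilde a_1 - \tilde a_M| + (M-1)\bigl(2(\diam X - \diam Y) - 2D - 2\varepsilon\bigr).
\]

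On the other hand, the distortion bound applied to $(\tilde a_1, x_1), (\tilde a_M, x_1)$ gives $b_M^{(1)} - b_1^{(1)} \le |\tilde a_1 - \tilde a_M| + D$. Combining the two and rearranging yields
\[
D \;\ge\; \frac{2(M-1)(\diam X - \diam Y - \varepsilon)}{2M - 1},
\]
whose right-hand side tends to $\diam X - \diam Y$ as $M \to \infty$ and $\varepsilon \to 0^+$, contradicting $D < \diam X - \diam Y$. The principal technical obstacle is the step that extracts a monotonically ordered same-orientation sub-collection: one needs both a pigeonhole argument to select a large same-orientation subset and then the claim that the almost-isometric map $\tilde a_i \mapsto b_i^{(1)}$ is monotone on well-separated points. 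The latter rests on the $1$-dimensionality of $\R$ and the fact that almost-isometries of unbounded subsets of $\R$ into $\R$ are, at sufficiently large scales, either orientation-preserving or orientation-reversing; this is precisely where the hypotheses $A \subset \R$ unbounded and $B \subset \R$ enter in an essential way.
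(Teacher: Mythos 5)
Your argument is correct and is essentially the paper's own proof: both select two nearly diametrical points of $X$ and a long chain of widely separated points coming from the unbounded set $A$, push them through the correspondence, project the images to the $\R$-factor of $B\times_{\ell^1}Y$, order them on the line, and telescope the consecutive gains of roughly $\diam X-\diam Y-\dis R$ against the endpoint distance bound to force $\dis R\ge\diam X-\diam Y$. The step you flag as the principal obstacle --- extracting a consistently ordered subfamily --- is precisely what the paper isolates as its betweenness lemma; in your formulation it can in fact be bypassed by simply sorting the $b_i^{(1)}$, since every estimate you use afterwards (the pairwise separation of the $\tilde a_i$, the orientation signs $\sigma_i$, and the triangle inequality $\sum_i|\tilde a_i\tilde a_{i+1}|\ge|\tilde a_1\tilde a_M|$) is symmetric in the slice indices and insensitive to how that sorted order relates to any order on the $\tilde a_i$.
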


\begin{proof}
Let $P = A'\times_{\ell^1}X$, $Q = B\times_{\ell^1}Y$.

If $\dist_{GH}(P,\,Q)=\infty$, then the desired inequality is obvious. Assume that~$\dist_{GH}(P,\,Q) < \infty$. Choose an arbitrary correspondence $R$ between $P$ and $Q$ with finite distortion $c:=\dis R$.

Let also $x_0,\,x_1\in X$ be such that $|x_0x_1| = t$.

Since $A'\in[A]$, there exists a correspondence $S$ between $A$ and $A'$ with distortion $\dis S = w<\infty$.

Since $A\subset\R$ is an unbounded subset, there exist points $p_1<p_2<\,\ldots\,<p_{2n+1}$ in $A$ such that $d_i:=|p_ip_{i+1}|> 100(t+c+w+\diam Y)$ for all $i = 1,\,\ldots,\,2n+1$. Choose arbitrary $a_i\in S(p_i)$.

We put $A_{ij} = (a_i,\,x_j)\in A'\times_{\ell^1}X,\,i = 1,\,\ldots,\,2n+1$, $j = 0,\,1$.

Note that for all $1\le i < k \le 2n+1$, $j\in\Z/2\Z$ the following equalities hold
\begin{align*} |A_{ij}A_{k\,j+1}| = |a_ia_k|+t,\;|A_{ij}A_{kj}| = |a_ia_k|.
\end{align*}

\begin{figure}[ht]
\centering 
\begin{asy}[width = 0.5\linewidth]
size(5cm, 0);
import geometry;
real degree = 2;
real mr = 0.45cm;

point pA = (-6, 0);
point pB = (-6, 2);
point pC = (-2, 2);
point pD = (-2, 0);
point pE = (2, 2);
point pF = (2, 0);

draw(line(pA, pF), dashed);
draw(line(pB, pC), dashed);

dot("$A_{10}$", pA, S);
dot("$A_{11}$", pB, N);
dot("$A_{21}$", pC, N);
dot("$A_{20}$", pD, S);
dot("$A_{30}$", pF, S);
dot("$A_{31}$", pE, N);

draw("$t$", pA--pB, W);
draw("$|a_1a_2|$", pB--pC, N, fontsize(8pt));
draw("$t$", pC--pD, E);
draw("$|a_1a_2|$", pD--pA, S, fontsize(8pt));
draw("$|a_2a_3|$", pC--pE, N, fontsize(8pt));
draw("$|a_2a_3|$", pD--pF, S, fontsize(8pt));
draw("$t$", pE--pF, W);

\end{asy}
\end{figure}

We choose $B_{ij} = (b_{ij},\,y_{ij})\in R(A_{ij})$ arbitrarily.

Note that the inequalities hold
\begin{align*}
|B_{ik}B_{jl}|\ge|b_{ik}-b_{jl}| = |B_{ik}B_{jl}| - \dist_Y(y_{ik},\,y_{jl}) \ge |B_{ik}B_{jl}| - \diam Y.
\end{align*}

\begin{lemma}
For arbitrary $1\le i < j < k \le 2n+1$ and $\alpha,\,\beta,\,\gamma$ from $\Z/2\Z$, the point $b_{j\beta}$ lies strictly between $b_{i\alpha}$ and $b_{k\gamma}$.
\end{lemma}

\begin{proof}
By the definition of distortion, the following inequalities hold:

\begin{multline*}
|b_{i\alpha}b_{k\gamma}|\ge |B_{i\alpha}B_{k\gamma}|-\diam Y\ge |A_{i\alpha}A_{k\gamma}|-(\diam Y+c)\ge\\\ge |a_ia_k|-(\diam Y+c)\ge |p_ip_k|-(\diam Y+c+w)=\\= (d_i + \ldots + d_{k-1}) - (\diam Y+c+w).
\end{multline*}
Assume that the statement being proven is false. Then
\begin{multline*}
|b_{i\alpha}b_{k\gamma}| = \bigl||b_{i\alpha}b_{j\beta}| - |b_{j\beta}b_{k\gamma}|\bigr|\le\\\le \max\bigl\{|b_{i\alpha}b_{j\beta}|,\,|b_{j\beta}b_{k\gamma}|\bigr\} \le \max\bigl\{|B_{i\alpha}B_{j\beta}|,\,|B_{j\beta}B_{k\gamma}|\bigr\}\le\\ \le \max\bigl\{|A_{i\alpha}A_{j\beta}|+c,\,|A_{j\beta}A_{k\gamma}|+c\bigr\}\le \max\bigl\{|a_ia_j|+t+c,\,|a_ja_k|+t+c\bigr\}\le \\\le\max\bigl\{|p_ip_j|+t+c+w,\,|p_jp_k|+t+c+w\bigr\} =\\= \max\bigl\{(d_i+\ldots+d_{j-1})+w+t+c,\,(d_j+\ldots+d_{k-1})+w+t+c\bigr\} <\\< (d_i+\ldots + d_{k-1})-w-c-\diam Y,
\end{multline*}
where the last inequality is satisfied, since $d_l > 2(w+c)+t+\diam Y$ for each $l$. That is a contradiction.
\end{proof}

Thus, without loss of generality, we can assume that pairs of points $\{b_{ij},\,b_{i\,j+1}\}$ are located on a line in ascending order of indices $i$.

\begin{figure}[ht]
\centering 
\begin{asy}[width = 0.5\linewidth]
size(5cm, 0);
import geometry;
real degree = 2;
real mr = 0.45cm;

point pA = (0, 0);
point pB = (1, 0);
point pC = (6, 0);
point pD = (7, 0);
point pE = (12, 0);
point pF = (13, 0);

dot("$b_{10}$", pA, S, fontsize(8pt));
dot("$b_{11}$", pB, NE, fontsize(8pt));
dot("$b_{21}$", pC, S, fontsize(8pt));
dot("$b_{20}$", pD, NE, fontsize(8pt));
dot("$b_{31}$", pE, S, fontsize(8pt));
dot("$b_{30}$", pF, NE, fontsize(8pt));

draw(pA--pB);
draw(pC--pD);
draw(pE--pF);

//label("$\ge d_1-c'-w$", pB--pC, S, fontsize(3pt));
//label("$\ge d_2-c'-w$", pD--pE, S, fontsize(3pt));
//label("$d/3$", pA--pB, N, fontsize(8pt));
//label("$d/3$", pC--pD, N, fontsize(8pt));
//label("$d/3$", pE--pF, N, fontsize(8pt));

draw(line(pA, pD), dashed);
\end{asy}
\label{Example1B}
\end{figure}

We choose indices $i_1,\,i_2,\,\ldots,\,i_{2n+1}\in\Z/2\Z$ such that \\
1) $b_{1i_1}$ is the leftmost point of all $b_{ij}$, $i = 1,\,\ldots,\,2n+1$, $j=0,\,1$;\\ 2) $|A_{ji_j}A_{j+1\,i_{j+1}}| = t+|a_ja_{j+1}|$ for each $j = 1,\,\ldots,\,2n$.

Then the following inequalities hold
\begin{multline*}
 c+w +(d_1+\ldots + d_{2n}) = c+w+|p_1p_{2n+1}|\ge c + |a_{1i_1}a_{2n+1\,i_{2n+1}}| =\\= c + |A_{1i_1}A_{2n+1\,i_{2n+1}}|\ge |B_{1i_1}B_{2n+1\,i_{2n+1}}|\ge  |b_{1i_1}b_{2n+1\,i_{2n+1}}|=\sum_{k = 1}^{2n} |b_{ki_k}b_{k+1\,i_{k+1}}|\ge\\\ge \sum_{k=1}^{2n}\bigl(|B_{ki_k}B_{k+1\,i_{k+1}}|-\diam Y\bigr)\ge\sum_{k = 1}^{2n} \bigl(|A_{ki_k}A_{k+1\,i_{k+1}}| - c - \diam Y\bigr) =\\= \sum_{k=1}^{2n} |a_ka_{k+1}| +  (t-c-\diam Y)\cdot 2n \ge |a_{1i_1}a_{2n+1\,i_{2n+1}}| + (t-c-\diam Y)\cdot 2n\ge\\\ge |p_{1i_1}p_{2n+1\,i_{2n+1}}|-w + (t-c-\diam Y)\cdot 2n= (d_1+\ldots +d_{2n})-w + (t-c-\diam Y)\cdot 2n. 
\end{multline*}
We obtain that $$c+\frac{2w}{2n+1}+\frac{2n}{2n+1}\diam Y\ge \frac{2n}{2n+1}t.$$

Since $t$ can be chosen arbitrarily close to $\diam X$, and $n$ can be chosen arbitrarily large, we obtain that $c\ge \diam X - \diam Y$.

Now, due to the arbitrariness of the chosen correspondence $R$, the desired estimate follows from the Proposition~\ref{proposition: distGHformula}.
\end{proof}

\begin{corollary}\label{corollary: productinequalities}
$1)$ Suppose $A,\,B\subset\R$, $X,\,Y\in[\Delta_1]$, $A$ and $B$ are unbounded. Then $$\dist_{GH}(A\times_{\ell^1}X,\,B\times_{\ell^1}Y)\ge\Bigl|\frac{\diam X - \diam Y}{2}\Bigr|.$$
$2)$ If $A\in [\R]$, $X\in[\Delta_1]$, then $$\dist_{GH}\bigl(A\times_{\ell^1}X,\,\R\bigr)\ge \frac{\diam X}{2}.$$
\end{corollary}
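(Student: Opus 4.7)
The plan is to deduce both parts directly from Theorem~\ref{theorem: mainproductinequality}; no new ideas are needed, only a symmetrization in~(1) and an identification of $\R$ as a trivial Manhattan product in~(2).

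For part~(1), I note that Theorem~\ref{theorem: mainproductinequality} is asymmetric in its two product spaces --- only the first space is required to have a factor in the cloud of an unbounded subset of~$\R$. Since here both $A$ and $B$ are unbounded subsets of~$\R$, the theorem applies in either direction. Taking $A' := A$ with the theorem's unbounded set equal to~$A$ yields
$$\dist_{GH}(A\times_{\ell^1}X,\,B\times_{\ell^1}Y)\ge\frac{\diam X-\diam Y}{2};$$
swapping the roles $(A,X)\leftrightarrow(B,Y)$ and invoking symmetry of $\dist_{GH}$ gives the reversed bound $(\diam Y-\diam X)/2$. The maximum of these two lower bounds is precisely $|\diam X-\diam Y|/2$.

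For part~(2), the key observation is the canonical isometry $\R\cong\R\times_{\ell^1}\Delta_1$, which is immediate from the definition of the Manhattan product (adjoining a single-point coordinate does not alter the metric). Rewriting the right-hand side as $\R\times_{\ell^1}\Delta_1$ puts the inequality into the exact format of Theorem~\ref{theorem: mainproductinequality}: the theorem's unbounded subset is $\R$, its $A'$ is our~$A\in[\R]$, its~$B$ is~$\R$, its first bounded factor is our~$X$, and its second bounded factor is~$\Delta_1$. Since $\diam\Delta_1=0$, the resulting bound collapses to $\diam X/2$.

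Neither part presents a genuine obstacle; all the heavy lifting has already been done inside Theorem~\ref{theorem: mainproductinequality}. The only things to check are the legitimacy of the symmetry step in~(1), which rests on the symmetry of $\dist_{GH}$ together with the unboundedness of~$B$, and the isometry $\R\cong\R\times_{\ell^1}\Delta_1$ in~(2), which is tautological from the definition of~$\times_{\ell^1}$.
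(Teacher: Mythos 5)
Your proposal is correct and follows exactly the paper's own argument: part (1) is obtained by applying Theorem~\ref{theorem: mainproductinequality} in both directions (using that both $A$ and $B$ are unbounded) and combining the two bounds, and part (2) by applying the theorem to $A\times_{\ell^1}X$ and $\R\times_{\ell^1}\Delta_1$ with $\diam\Delta_1=0$. No differences worth noting.
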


\begin{proof}
1) Since $A,\,B\subset \R$ are unbounded subsets, according to Theorem \ref{theorem: mainproductinequality} the following inequalities hold
\begin{align*}
&\dist_{GH}(A\times_{\ell^1}X,\,B\times_{\ell^1}Y)\ge\frac{\diam X - \diam Y}{2},\\
&\dist_{GH}(B\times_{\ell^1}Y,\,A\times_{\ell^1}X)\ge\frac{\diam Y - \diam X}{2},
\end{align*}
from which the desired inequality follows.

2) Applying the inequality of Theorem \ref{theorem: mainproductinequality} for the spaces $A\times_{\ell^1}X$ and $\R\times_{\ell^1}\Delta_1$, we obtain the desired inequality.
\end{proof}

\begin{corollary}
Let $X$ be a bounded metric space and $A\subset \R$ be an unbounded subset. Then for any positive $t_1,\,t_2$ the equality holds $$\dist_{GH}\bigl(A\times_{\ell^1}(t_1X),\,A\times_{\ell^1}(t_2X)\bigr) = \frac{|t_1-t_2|}{2}\diam X.$$
\end{corollary}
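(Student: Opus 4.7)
The plan is to prove the two matching bounds, each using one of the results already established in the paper.

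For the upper bound, I would apply Lemma~\ref{correctprod} with $A_1 = A_2 = A$ and $B_1 = t_1 X$, $B_2 = t_2 X$. Since all of $A$, $t_1 X$, $t_2 X$ are bounded by finite Gromov--Hausdorff distance from themselves (indeed $\dist_{GH}(A,A) = 0$, and $t_1X,\,t_2X$ lie in $[\Delta_1]$), the lemma yields
\[
\dist_{GH}\bigl(A\times_{\ell^1}(t_1X),\,A\times_{\ell^1}(t_2X)\bigr)\le \dist_{GH}(A,A) + \dist_{GH}(t_1X,\,t_2X).
\]
The first summand is $0$, and by Theorem~\ref{theorem: geodesicinboundedspaces} the second summand equals $\tfrac{|t_1-t_2|}{2}\diam X$, giving the desired upper bound.

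For the lower bound, I would invoke Corollary~\ref{corollary: productinequalities}(1), which applies since $A\ss\R$ is unbounded (used for both copies) and $t_1X,\,t_2X\in[\Delta_1]$. It gives
\[
\dist_{GH}\bigl(A\times_{\ell^1}(t_1X),\,A\times_{\ell^1}(t_2X)\bigr)\ge \Bigl|\frac{\diam(t_1X) - \diam(t_2X)}{2}\Bigr| = \frac{|t_1-t_2|}{2}\diam X,
\]
because $\diam(tX) = t\diam X$ for $t \ge 0$. Combining this with the upper bound yields equality.

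There is essentially no obstacle here: the statement is a direct corollary of Theorem~\ref{theorem: mainproductinequality} (through Corollary~\ref{corollary: productinequalities}) together with the standard product-correspondence estimate from Lemma~\ref{correctprod} and the known geodesic property for bounded spaces from Theorem~\ref{theorem: geodesicinboundedspaces}. The only minor point worth checking is that the corollary's hypothesis that $A$ is unbounded must be used for \emph{both} factors in the lower bound application, which is automatic since the same space $A$ appears on both sides.
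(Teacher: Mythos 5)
Your proof is correct and follows essentially the same route as the paper: the upper bound via Lemma~\ref{correctprod} combined with Theorem~\ref{theorem: geodesicinboundedspaces}, and the lower bound via Theorem~\ref{theorem: mainproductinequality} (you pass through its symmetrized form, Corollary~\ref{corollary: productinequalities}(1), which is an immaterial difference). No gaps.
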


\begin{proof}
We put $A_t = A\times_{\ell_1}(tX)$. Then it follows from Theorem~\ref{theorem: mainproductinequality} that $\dist_{GH}(A_{t_1},\,A_{t_2})\ge \frac{|t_1-t_2|\diam X}{2}$. On the other hand, by Theorem~\ref{theorem: geodesicinboundedspaces} the equality $\dist_{GH}\bigl(t_1X,\,t_2X\bigr) = |t_1-t_2|\diam X$ holds. Then it follows from Lemma~\ref{correctprod} that $\dist_{GH}(A_{t_1},\,A_{t_2})\le\frac{|t_1-t_2|\diam X}{2}$. Therefore, $\dist_{GH}(A_{t_1},\,A_{t_2})=\frac{|t_1-t_2|}{2}\diam X$, which is what was required to be proved.
\end{proof}

\begin{corollary}
For an arbitrary bounded metric space $X$, the curve $\{\R\}\cup\bigl\{t(\R\times_{\ell^1}X)\: t\in(0,\,+\infty)\bigr\}$ is a geodesic in the Gromov--Hausdorff class.  
\end{corollary}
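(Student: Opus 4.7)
The plan is to verify directly the defining identity of a geodesic, namely that
$$d_{GH}\bigl(\gamma(t_1),\,\gamma(t_2)\bigr)=\frac{|t_1-t_2|}{2}\diam X$$
along the whole curve $\gamma(0)=\R$, $\gamma(t)=t(\R\times_{\ell^1}X)$ for $t>0$. First, I would observe that for any $t>0$ we have an isometry $t(\R\times_{\ell^1}X)\cong(t\R)\times_{\ell^1}(tX)\cong\R\times_{\ell^1}(tX)$, because $t\R$ is isometric to $\R$. Thus, setting $A_t:=\R\times_{\ell^1}(tX)$ for $t>0$ and $A_0:=\R$, it suffices to establish the displayed equality for the family $\{A_t\}_{t\ge 0}$.

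For two parameters $t_1,t_2>0$ the needed equality $d_{GH}(A_{t_1},A_{t_2})=\tfrac{|t_1-t_2|}{2}\diam X$ is exactly the previous corollary applied with $A=\R$, so nothing remains to prove in that case. The only new case is $t_1=0$, $t_2=t>0$, where we have to check
$$d_{GH}(\R,\,\R\times_{\ell^1}(tX))=\tfrac{t}{2}\diam X.$$
The lower bound is immediate from part~(2) of Corollary~\ref{corollary: productinequalities} applied to the bounded space $tX$, since $\diam(tX)=t\diam X$.

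For the matching upper bound, I would write down the natural projection correspondence
$$R=\bigl\{\bigl((r,x),\,r\bigr)\: r\in\R,\;x\in tX\bigr\}\in\mathcal{R}\bigl(\R\times_{\ell^1}(tX),\,\R\bigr)$$
and compute that for any two pairs $((r,x),r)$ and $((r',x'),r')$ in $R$ the difference of distances equals $d_{tX}(x,x')$, which is bounded above by $\diam(tX)=t\diam X$. Hence $\dis R\le t\diam X$, so by Proposition~\ref{proposition: distGHformula} we obtain $d_{GH}(\R,A_t)\le\tfrac{t}{2}\diam X$, closing the gap. Combining this with the case $t_1,t_2>0$ yields the geodesic identity for all $t_1,t_2\ge 0$, which is what is required. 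There is no real obstacle here: the entire content has already been packaged into Theorem~\ref{theorem: mainproductinequality} and its corollary, and only the matching of the endpoint $t=0$ with an explicit projection correspondence remains to be checked.
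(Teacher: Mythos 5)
Your proof is correct and follows exactly the route the paper intends for this corollary (which it leaves unproved): the case $t_1,t_2>0$ is the preceding corollary with $A=\R$ after the isometry $t(\R\times_{\ell^1}X)\cong\R\times_{\ell^1}(tX)$, and the endpoint $t=0$ is settled by part~(2) of Corollary~\ref{corollary: productinequalities} for the lower bound together with the projection correspondence (equivalently, Lemma~\ref{correctprod} applied to $\R\times_{\ell^1}(tX)$ and $\R\times_{\ell^1}\Delta_1$) for the upper bound.
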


\begin{example}
Let us show that for arbitrary $P=A\times_{\ell^1}X$ and $Q=B\times_{\ell^1}Y$ such that $A,\,B\in[\R]$, $X,\,Y\in[\Delta_1]$, the estimate $\dist_{GH}(P,\,Q)\ge |\diam(X)-\diam(Y)|/2$ is, in general, false.

Consider $P = (\R+c)\times_{\ell_1} [0,1]$ and $Q = \R\times_{\ell^1} \bigl([0,1]+c\bigr)$. Here $\R+c$, $[0,1]+c$ are the spaces $\R$, $[0,1]$ with metrics induced from $\R$ and increased by the constant $c>0$. The spaces $P$ and $Q$ are isometric via the identity mapping, although the difference in the diameters of their bounded factors is equal to $c$.
\end{example}

\begin{theorem}
For arbitrary $\lambda > 1$, $n\in\N$ the inequality $$\dist_{GH}\bigl(\Z^n,\,\lambda\Z^n\bigr)\ge\frac{1}{2}.$$
\end{theorem}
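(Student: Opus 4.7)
The plan is to argue by contradiction, using Proposition~\ref{proposition: distGHformula} to obtain a low-distortion correspondence and then a counting argument via Theorem~\ref{theorem: integerpointsinunitball}. Assume $\dist_{GH}(\Z^n,\,\lambda\Z^n) < 1/2$; then there is a correspondence $R\in\mathcal{R}(\Z^n,\,\lambda\Z^n)$ with $c:=\dis R < 1$. I would aim to promote $R$ to an honest bijection, then compare lattice-point counts in balls.

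The first step is to show $R$ is the graph of a bijection $f\:\Z^n\to\lambda\Z^n$. If $(x,\,y),\,(x,\,y')\in R$, the distortion inequality gives $|yy'|\le c<1<\lambda$; but any two distinct points of $\lambda\Z^n$ are at distance at least $\lambda$, forcing $y=y'$. Combined with surjectivity of $\pi_X|_R$, this yields a well-defined function $f$. The symmetric argument, using the minimum distance $1$ in $\Z^n$ together with $c<1$, gives injectivity of $f$, while surjectivity of $f$ follows from surjectivity of $\pi_Y|_R$.

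The second step uses translation-invariance of both lattices: after replacing $R$ by its shift $\bigl\{(x-x_0,\,y-y_0)\:(x,\,y)\in R\bigr\}$ for some fixed $(x_0,\,y_0)\in R$, we may assume $f(0)=0$. Then $|f(x)|\le |x|+c$ for every $x\in\Z^n$, so for every $t>0$, $f$ restricts to an injection $B_t(0)\cap\Z^n\hookrightarrow B_{t+c}(0)\cap\lambda\Z^n$.

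The final step is the count. By Theorem~\ref{theorem: integerpointsinunitball},
\[
|B_t(0)\cap\Z^n|=\Vol B_1(0)\cdot t^n\bigl(1+o(1)\bigr),\qquad t\to\infty,
\]
and since scaling by $\lambda$ identifies $B_{t+c}(0)\cap\lambda\Z^n$ with $B_{(t+c)/\lambda}(0)\cap\Z^n$, the same theorem gives $|B_{t+c}(0)\cap\lambda\Z^n|=\Vol B_1(0)\cdot\bigl((t+c)/\lambda\bigr)^n\bigl(1+o(1)\bigr)$. Injectivity of $f$ on the $t$-ball then forces $\lambda^n t^n(1+o(1))\le(t+c)^n(1+o(1))$; letting $t\to\infty$ yields $\lambda^n\le 1$, contradicting $\lambda>1$. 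The main delicate point is the bijection reduction in step one, which relies crucially on the uniform discreteness of both lattices together with $c<1\le\lambda$; once that is in hand, the asymptotic count dictates the contradiction.
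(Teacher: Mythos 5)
Your proposal is correct and follows essentially the same route as the paper: both reduce to showing that a correspondence of distortion less than $1$ must be (the graph of) a bijection, and both then derive a contradiction from the lattice-point count of Theorem~\ref{theorem: integerpointsinunitball} applied to nested balls. The only difference is organizational (you argue contrapositively and spell out the uniform-discreteness step that the paper leaves implicit in ``$R$ is not bijective, then $c\ge 1$''), which if anything makes the argument slightly more complete.
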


\begin{proof}
Let $R\in\mathcal{R}\bigl(\Z^n,\,\lambda\Z^n\bigr)$ be a correspondence with distortion $c:=\dis R < \infty$. Since the shift of $\Z^n$ by an integer vector is an isometry, we can assume without loss of generality that $(0,\,0)\in R$.

Assume that $R$ is bijective. Then consider the ball $B = B^{\Z^n}_{\lambda t}(0)$ for some $t > 0$. Note that $R(B)\subset B' = B^{\lambda\Z^n}_{\lambda t+c}(0)$. By $N(t)$, $N'(t)$ we denote the numbers of points in the balls $B$ and $B'$, respectively. By Theorem~\ref{theorem: integerpointsinunitball} for $t\to\infty$ the following equalities hold
\begin{align}
&N(t) = \Vol B^{\R^n}_1(0) \lambda^nt^n\bigl(1+o(1)\bigr), \label{asimptotic1}\\
&N'(t) = \Vol B^{\R^n}_1(0)\Bigl(t+\frac{c}{\lambda}\Bigr)^n\bigl(1+o(1)\bigr)\label{asimptotic2}.
\end{align}
Since $R$ is bijective, it follows from the inclusion $R(B)\subset B'$ that $N'(t)\ge N(t)$. However, from the formulas $(\ref{asimptotic1})$, $(\ref{asimptotic2})$ it follows that $\lim_{t\to\infty} \frac{N(t)}{N'(t)} = \lambda^n > 1$ --- a contradiction.

So, $R$ is not bijective. Then $c \ge 1$. Since $R$ is arbitrary, by the Proposition~\ref{proposition: distGHformula} we obtain the desired inequality.
\end{proof}

\begin{corollary}
The mapping $[\R^n]\times (0;\,+\infty)\to[\R^n]$, $(A,\,\lambda)\to \lambda A$ is not continuous in $\lambda$.
\end{corollary}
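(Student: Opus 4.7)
The plan is to apply the preceding theorem with $A=\Z^n$ regarded as an element of the cloud $[\R^n]$. First I would verify that $\Z^n$ indeed lies in $[\R^n]$: every point of the Euclidean space $\R^n$ is within distance $\tfrac{\sqrt n}{2}$ of some lattice point, so $\Z^n$ is a $\tfrac{\sqrt n}{2}$-net in $\R^n$, and the Hausdorff distance between them in their common ambient space is at most $\tfrac{\sqrt n}{2}$. Consequently $\dist_{GH}(\Z^n,\,\R^n)<\infty$, so $\Z^n\in[\R^n]$.

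Next, I would invoke the sequential interpretation of ``continuous in $\lambda$'' spelled out in the Introduction: continuity of the map $(A,\lambda)\mapsto\lambda A$ at a pair $(A,\lambda_0)$ requires that for every sequence $\lambda_k\to\lambda_0$ one has $\lambda_k A\to\lambda_0 A$ in the Gromov--Hausdorff sense. To refute this at the point $(\Z^n,\,1)$, I would pick any sequence $\lambda_k>1$ with $\lambda_k\to 1$. The preceding theorem then yields
\[
\dist_{GH}\bigl(\Z^n,\,\lambda_k\Z^n\bigr)\ge\tfrac{1}{2}
\]
for every $k$. Since $1\cdot\Z^n=\Z^n$, this uniform lower bound prevents $\lambda_k\Z^n$ from converging to $\Z^n$, so the scaling map fails to be continuous in $\lambda$ at $(\Z^n,\,1)$.

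The substantive work has already been done in the preceding theorem, so no real obstacle remains in the corollary itself; the only delicate point is to fix an honest meaning of ``continuity in $\lambda$'' on the proper class $[\R^n]$, which the Introduction handles by the sequential criterion used above.
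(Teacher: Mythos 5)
Your proposal is correct and is exactly the argument the paper intends: the corollary is stated as an immediate consequence of the preceding theorem, and your instantiation at $A=\Z^n$, $\lambda_k\to 1^+$ with the uniform lower bound $\dist_{GH}(\Z^n,\lambda_k\Z^n)\ge\frac12$ is the intended deduction. The additional check that $\Z^n\in[\R^n]$ via the $\frac{\sqrt n}{2}$-net observation is a sensible detail the paper leaves implicit.
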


\begin{corollary}
The curve $\lambda \Z^n$, $\lambda\in(0;\,\infty)$ is not continuous in the Gromov--Hausdorff class. In particular, it is not a geodesic.
\end{corollary}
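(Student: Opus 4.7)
The plan is to derive discontinuity at every point $\lambda_0\in(0,\,\infty)$ directly from the preceding theorem combined with the scaling identity $\dist_{GH}(aX,\,aY)=a\,\dist_{GH}(X,\,Y)$ for $a>0$. This identity is an immediate consequence of Proposition~\ref{proposition: distGHformula}, since any correspondence $R\in\mathcal{R}(X,\,Y)$ is also a correspondence between $aX$ and $aY$, and its distortion is multiplied by $a$ when both metrics are scaled by $a$.

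First, I would note that $\lambda\Z^n\subset\R^n$ with its induced metric is isometric to the rescaling of the metric space $\Z^n$ by the factor $\lambda$. Then, using the scaling identity, for $\lambda>\lambda_0>0$ and $\mu:=\lambda/\lambda_0>1$, I factor out $\lambda_0$ to obtain
$$\dist_{GH}(\lambda_0\Z^n,\,\lambda\Z^n)=\lambda_0\,\dist_{GH}(\Z^n,\,\mu\Z^n)\ge\frac{\lambda_0}{2},$$
where the final inequality is the previous theorem applied to $\mu$. Symmetrically, for $\lambda<\lambda_0$ I would factor out $\lambda$ instead and deduce the lower bound $\lambda/2$.

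Combining the two cases, for every $\lambda\in(\lambda_0/2,\,2\lambda_0)\setminus\{\lambda_0\}$ we have $\dist_{GH}(\lambda_0\Z^n,\,\lambda\Z^n)\ge\lambda_0/4>0$, so $\dist_{GH}(\lambda_0\Z^n,\,\lambda\Z^n)$ does not tend to $0$ as $\lambda\to\lambda_0$. This contradicts continuity at $\lambda_0$. The second assertion, that the curve fails to be a geodesic, is then automatic: any geodesic parameterization is Lipschitz and hence continuous. No real obstacle arises; the statement is a direct consequence of the previous theorem and the routine scaling invariance of the Gromov--Hausdorff distance.
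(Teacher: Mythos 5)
Your proof is correct and takes essentially the approach the paper intends: the corollary is stated without proof as an immediate consequence of the theorem $\dist_{GH}(\Z^n,\,\mu\Z^n)\ge\tfrac{1}{2}$ for $\mu>1$, which already gives discontinuity at $\lambda_0=1$ (the distance does not tend to $0$ as $\lambda\to 1^+$). Your addition of the scaling identity $\dist_{GH}(aX,\,aY)=a\,\dist_{GH}(X,\,Y)$ to upgrade this to discontinuity at every $\lambda_0\in(0,\,\infty)$ is a valid and slightly stronger elaboration of the same argument.
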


\clearpage


\end{document}